\tikzstyle{vertex}=[circle, draw, inner sep=0pt, minimum size=6pt]
\newtheorem{thm}{Theorem}[section]
\newtheorem{pro}[thm]{Proposition}
\newtheorem{lem}[thm]{Lemma}
\newtheorem{cla}[thm]{Claim}
\newtheorem{cor}[thm]{Corollary}
\theoremstyle{definition}
\newtheorem{obs}[thm]{Observation}
\newtheorem{rem}[thm]{Remark}
\newtheorem{exa}[thm]{Example}
\newtheorem{defn}[thm]{Definition}
\newtheorem{conj}[thm]{Conjecture}
\newcommand{\een}{\end{enumerate}}
\newcommand{\blem}{\begin{lem}}
\newcommand{\elem}{\end{lem}}
\newcommand{\bcl}{\begin{cla}}
\newcommand{\ecl}{\end{cla}}
\newcommand{\ethm}{\end{thm}}
\newcommand{\bpr}{\begin{pro}}
\newcommand{\epr}{\end{pro}}
\newcommand{\bco}{\begin{cor}}
\newcommand{\eco}{\end{cor}}
\newcommand{\bcon}{\begin{conj}}
\newcommand{\econ}{\end{conj}}
\newcommand{\bde}{\begin{defn}}
\newcommand{\ede}{\end{defn}}
\newcommand{\bex}{\begin{exa}}
\newcommand{\eexa}{\end{exa}}
\newcommand{\bobs}{\begin{obs}}
\newcommand{\eobs}{\end{obs}}
\newcommand{\bexe}{\begin{exe}}
\newcommand{\eexe}{\end{exe}}
\begin{document}

\title{Weighted Coxeter graphs and generalized geometric representations of Coxeter Groups}
\author[1]{Vadim Bugaenko}
\author[2]{Yonah Cherniavsky\footnote{The second and third authors were supported by
the Swiss National Science Foundation.}\footnote{{\it Corresponding author}: yonahch@ariel.ac.il}}
\author[3]{Tatiana Nagnibeda}
\author[4]{Robert Shwartz}

\affil[1]{Department of Computer Science and Mathematics, Ariel University, Israel; vdmbgnk@gmail.com}
\affil[2]{Department of Computer Science and Mathematics, Ariel University, Israel; yonahch@ariel.ac.il}
\affil[3]{Department of Mathematics, University of Geneva, Switzerland; tatiana.smirnova-nagnibeda@unige.ch}
\affil[4]{Department of Computer Science and Mathematics, Ariel University, Israel; robertsh@ariel.ac.il}


\date{\vspace*{-2em}}
\maketitle
\begin{abstract} 
\textbf{Abstract.} We introduce the notion of weighted Coxeter graph and associate to it a certain generalization of the standard geometric representation of a Coxeter group. We prove sufficient conditions for faithfulness and non-faithfulness of such a representation. In the case when the weighted Coxeter graph is balanced we discuss how the generalized geometric representation is related to the numbers game played on the Coxeter graph.
\\ \\
\textbf{Keywords:} Coxeter groups; Coxeter graphs; geometric representation; numbers game; signed graphs; weighted graphs; balanced functions on graphs.
\end{abstract}

\section{Introduction and preliminaries}
The theory of Coxeter groups is connected with different fields of mathematics: algebra, geometry, combinatorics, graph theory. In this work we study certain linear representations of Coxeter groups and relate them to the corresponding Coxeter graphs.

From the algebraic point of view a Coxeter system $(W,S)$ is a group $W$ with the set of generators $S=\{s_1,s_2,...,s_n\}$ and relations $s_i^2=1$, $(s_is_j)^{m_{ij}}=1$, where $2\leqslant m_{ij}\in\mathbb N$ or $m_{ij}=\infty$, the latter means  that there is no relation between $s_i$ and $s_j$.

Geometrically, Coxeter groups can be viewed as discrete groups generated by orthogonal reflections in a vector space with a pseudo-euclidian metric.

A Coxeter group can also be defined by its Coxeter graph. Its vertex set coincides with the set $S$ of Coxeter generators. Two vertices $s_i$ and $s_j$ are not connected if $m_{ij}=2$, are connected by an unlabeled edge if $m_{ij}=3$ and are connected by an edge labeled by $m_{ij}$ if $m_{ij}>3$. Edges labeled by $m_{ij}$ with $m_{ij}>3$ are called {\it multiple edges}.
A Coxeter group is called {\it simply laced} if its Coxeter graph does not have multiple edges, i.e., $m_{ij}$ equals to 2 or 3 for all $i\neq j$.

 A combinatorial model of the Coxeter group can be given by the so called Mozes' {\bf numbers game} which we briefly describe now. Consider a graph with vertices indexed by 1,2,..., $n$ and numbers $x_j$ associated to its vertices. The column vectors $(x_1,x_2,...,x_n)^t$ are called configurations or game positions. The (legal) ``moves" in the game (which are also called ``firings") are local rearrangements of the assigned values at a chosen node and its neighbors: the move which corresponds to a vertex $j$ consists of adding the value $x_j$ multiplied by a certain positive weight $k_{ij}$ to each value $x_i$, i.e., $x_i:=x_i+k_{ij}x_j$, where the vertices $j$ and $i$ are neighbors, and then reversing the sign, i.e., $x_j:=-x_j$. Using Weyl groups and Kac-Moody algebras, see~\cite{Bourbaki} and~\cite{Kac} for definitions and other details, Mozes in~\cite{M} gave an algebraic characterization
of the initial positions giving rise to finite games and proved that for those the number of steps and the final
configuration do not depend on the moves of the player. It was developed as a generalization of the following problem of the International Mathematical Olympiad (IMO) in 1986: five integers with positive sum are assigned to the vertices of a pentagon. If there is at least
one negative number, the player may pick one of them, say $y$, add it to its two neighbors $x$ and $z$, and then reverse the
sign of $y$. The game terminates when all the numbers are nonnegative. Prove that this game must always terminate. Several solutions to this problem can be found in~\cite{WR}. Eriksson and Bj\"orner found deep connections between the numbers game and Coxeter groups: taking the positive weights $k_{ij}$ satisfying $k_{ij}k_{ji}=4\cos^2\left(\pi/m_{ij}\right)$ where $m_{ij}$ is the minimal number such that for Coxeter generators $s_i$ and $s_j$ there is the relation $\left(s_is_j\right)^{m_{ij}}=1$ and $k_{ij}k_{ji}\geqslant4$ when the element $s_is_j$ has infinite order, the numbers game becomes a combinatorial model of the Coxeter group, where group elements correspond to positions and reduced decompositions correspond to legal play sequences, see Chapter 4 in~\cite{BB} and~\cite{E} for details.

Any Coxeter group has the canonical linear representation called the {\bf geometric representation} defined in the following way. The dimension of this representation equals to the number of Coxeter generators. For each pair of generators $(s_i,s_j)$, which do not commute, choose a {\bf positive} number $k_{ij}$ such that $k_{ij}k_{ji}=4\cos^2\left(\pi/m_{ij}\right)$; if $m_{ij}=\infty$, put $k_{ij}=k_{ji}=2$.  Each generator $s_i$ is mapped to the matrix $\sigma_i$ which differs from the identity matrix only in its $i$-th row. The diagonal element in the position $(i,i)$ is $-1$, and for $i\neq j$ the entry in the position $(i,j)$ equals to $k_{ij}$; if the generators $s_i$ and $s_j$ commute, then the entry in the position $(i,j)$ is zero. Numbers $k_{ij}$ and $k_{ji}$ may be different. If all the numbers $m_{ij}$ are 2,3,4,6 or $\infty$ it is possible to choose all $k_{ij}$ integers. We emphasize that the numbers $k_{ij}$ are positive because it is crucial for the following theorem which is well known (see \cite{BB}):\\
\textbf{Theorem.} Matrices $\sigma_i$ satisfy the Coxeter relations, i.e., the mapping $s_i\mapsto\sigma_i$ is a representation. The representation $s_i\mapsto\sigma_i$ is faithful.
The representation $s_i\mapsto\sigma_i$ is called the {\bf geometric representation}.

If for any $i,j$, $k_{ij}=k_{ji}=2\cos\left(\pi/m_{ij}\right)$, then the representation $s_i\mapsto\sigma_i$ is called the standard geometric representation. When the group is simply laced, all non-zero entries of matrices $\sigma_i$ of the standard geometric representation are $\pm1$.

The connection between the numbers game and the standard geometric representation is as follows: it is easy to see that a move associated with the vertex $i$ in the described above numbers game is the action of the matrix $\sigma_i^t$ on the column vector of configuration $(x_1,x_2,...,x_n)^t$.

This paper is devoted to a natural generalization of the objects defined above, obtained by allowing the numbers $k_{ij}$ which are used in the definition of the geometric representation and numbers game to take not necessarily real positive values. We define a corresponding version of the numbers game and a generalization of the standard geometric representation and give sufficient conditions for its faithfulness and non-faithfulness. In the next section we give main definitions and formulate the results. We freely use the standard terminology and results on Coxeter groups, Coxeter graphs etc. applied in the monographs~\cite{Bourbaki},~\cite{Kac},~\cite{H},~\cite{BB}.

\section{Weighted Coxeter Graph and the Corresponding Representation}
In this paper we shall consider geometric representations and number games associated to {\it weighted Coxeter graphs}. Let us associate weights to the edges of Coxeter graphs as follows.

 Denote by $\bar E$ the set of directed edges of the graph $\Gamma=(V,E)$, i.e.,
$$
\left\{s_i,s_j\right\}\in E\Longleftrightarrow\left(s_i,s_j\right)\,,\,\left(s_j,s_i\right)\in\bar E\,.
$$
\bde\label{lwf}
Let $\Gamma=(V,E)$ be the Coxeter graph of the  Coxeter system $(W,S)$.
Define a {\bf legal weight function} to be a function
$$
f\,:\,\bar E\rightarrow \mathbb C\backslash \{0\}\quad\textrm{such that}\quad f\left(\left(s_i,s_j\right)\right)=\left( f\left(\left(s_j,s_i\right)\right)\right)^{-1}\,.
$$
\ede
Notice that the complex numbers $\left\{f\left(\left(s_i,s_j\right)\right)\right\}$ and the integers $\left\{m_{ij}\right\}$ (when $m_{ij}>3$) are two independent sets of weights on the edges of our graph.

\bde
We say that the triple $\Gamma_f=(V,E,f)$ is a {\bf weighted Coxeter graph} if $\Gamma=(V,E)$ is a Coxeter graph and $f$ is a legal weight function on its edges.
\ede
\bde
 For any path in a weighted Coxeter graph we can define the {\bf weight of the path} as the product of all weights of the edges of which this path consists. We call a weighted Coxeter graph {\bf balanced} if the weight of any closed path in this graph is equal to 1.
\ede
\begin{rem}
The notion of balanced legal functions on directed edges of a graph is introduced in the literature under different names. Thus, for example, in \cite{BHN} the set of logarithms of real-valued balanced legal functions is exactly $Im(d)$ and in \cite{KY}, in somewhat different language, that set is referred-to as the set of consistent graphs, see also~\cite{CGL2}. In~\cite{T} such functions have been introduced under the name ``color-coboundaries". Also they appear in literature under the name ``tensions", see, for example,~\cite{K}. In a rather common terminology introduced by Zaslavsky,~\cite{Z}, a pair of graph and such a balanced function on edges of a graph is called a balanced gain graph.
\end{rem}
\bde
In the particular case when the graph $\Gamma=(V,E)$ is the Coxeter graph of a simply laced Coxeter system and the legal weight function takes values in the cyclic group of order two, i.e., the weights on the edges are $\pm1$, we say that $\Gamma_f=(V,E,f)$ is a {\bf signed Coxeter graph}.
\ede
\begin{rem} The weight of any path in a balanced graph depends only on its start and end vertices, the weight does not depend on the path itself.
\end{rem}

The canonical construction of the geometric representation can be generalized for a weighted Coxeter graph $\Gamma_f=(V,E,f)$ in the following way.
\bde Let us construct the mapping: the generator $s_i$ is mapped to the $n\times n$ matrix $\omega_i$ which differs from the identity matrix only by the $i$-th row. The $i$-th row of the matrix $\omega_i$ has $-1$ at the position $(i,i)$, it has $f\left(\left(s_i,s_j\right)\right)\cdot2\cos\left(\pi/m_{ij}\right)$ in the position $(i,j)$ when the node $s_j$ is connected to the node $s_i$, and it has 0 in the position $(i,j)$ when the nodes $s_j$ and $s_i$ are not connected by an edge. Thus, we defined the mapping $\mathcal{R}_{\Gamma,f}\,:\,S\rightarrow GL_n\left(\mathbb C\right)$, $\mathcal{R}_{\Gamma,f}(s_i)=\omega_i$.
\ede
Note that if we put $k_{ij}:=f\left(\left(s_i,s_j\right)\right)\cdot2\cos\left(\pi/m_{ij}\right)$, then the equality $k_{ij}k_{ji}=4\cos^2\left(\pi/m_{ij}\right)$ still holds since $f\left(\left(s_i,s_j\right)\right)\cdot f\left(\left(s_j,s_i\right)\right)=1$ even though $k_{ij}\in\mathbb C$ are not necessarily positive.

Notice that in the simply laced case one can extend the construction and consider legal weights with values in an arbitrary abelian group $G$; the matrices $\omega_i$ in this case will be over the group ring $\mathbb C[G]$.
\bex Consider for example the weighted Coxeter graph of the symmetric group $S_4$:
\xygraph{
!{<0cm,0cm>;<1cm,0cm>:<0cm,1cm>::}
!{(5,0.05) }*{s_{1}\,\,}="1"
!{(6,0.05) }*{\,\,\,s_{2}\,}="2"
!{(7,0.05)}*{\,\,\,s_{3}\,}="3"
"1"-"2"^{a}
"2"-"3"^{b}
} $\quad$

$$s_1\mapsto\omega_1=\begin{pmatrix}
-1 &a &0 \\
0 &1 &0 \\
0 &0 &1
\end{pmatrix}\,,\, s_2\mapsto\omega_2=\begin{pmatrix}
1 &0 &0\\
a^{-1} &-1 &b\\
0 &0 &1
\end{pmatrix}\, , \,s_3\mapsto\omega_3=\begin{pmatrix}
1 &0 &0 \\
0 &1 &0 \\
0 &b^{-1} &-1
\end{pmatrix}\,.$$
Notice that when $a=b=1$, we get the standard geometric representation of $S_4$.
\eexa

It can be directly checked that the following proposition holds:
\bpr The mapping $\mathcal{R}_{\Gamma,f}(s_i)=\omega_i$ can be extended to a group homomorphism $W\rightarrow\mathbb C\setminus\{0\}$.
\epr
In other words, the matrix group $\Omega=\langle \omega_1,\omega_2,...,\omega_n\rangle$ is isomorphic to some quotient, may be proper, of the Coxeter group $W$.
The standard geometric representation is a particular case of the representation $\mathcal{R}_{\Gamma,f}$ when the function $f$ maps every edge to 1, i.e., the weight of any edge is 1.

It is natural to inquire which legal weight functions on Coxeter graphs give rise to faithful generalized geometric representations $\mathcal{R}_{\Gamma,f}$, and what quotients of $W$ we get by taking the image of $\mathcal{R}_{\Gamma,f}$. The aim of this paper is to investigate these questions and to provide the answers for some classes of Coxeter groups and weight functions.

\subsection{Results.} Our main results are as follows.

\noindent
\textbf{Theorem~\ref{mainsign}.}
Let $\Gamma_f=(V,E,f)$ be a signed Coxeter graph. Then the representation $\mathcal{R}_{\Gamma,f}$ is faithful if and only if $\Gamma_f$ is balanced, i.e., if and only if every cycle in the graph has an even number of $-1$'s.

One direction is true for arbitrary weighted graphs, namely

\noindent
\textbf{Proposition~\ref{mainis}.}
For a balanced weighted Coxeter graph $\Gamma_f=(V,E,f)$ the representation $\mathcal{R}_{\Gamma,f}$ is faithful.

For balanced graphs we associate with the representation $\mathcal{R}_{\Gamma,f}$ a generalized numbers game, similarly to how Mozes numbers game is associated with the (standard) geometric representation, see Section~\ref{BalWCxGrNumGm} below.

\noindent
\textbf{Theorem~\ref{nonfaith}.}
Let $\Gamma_f=(V,E,f)$ be a weighted Coxeter graph. Suppose, the vertices $s_1, s_2,...,s_n$ and edges $\left(s_1,s_2\right)$, $\left(s_2,s_3\right)$,...,$\left(s_n,s_1\right)$ form a cycle for which the product of weights of the edges is equal to $a$, where $a$ is an element of finite order $m$, $m>1$. Then, the representation $\mathcal{R}_{\Gamma,f}$ is not faithful.

Note that Coxeter groups are hopfian (being linear and finitely generated they are residually finite and thus hopfian by a theorem of Mal'cev), and so if $\mathcal{R}_{\Gamma,f}$ is not faithful, then the image of the representation is isomorphic to a proper quotient of $W$ not isomorphic to $W$.

As a partial converse to Theorem~\ref{nonfaith} we prove the following

\noindent
\textbf{Proposition~\ref{onecyclefaith}.}
Let $\Gamma_f=(V,E,f)$ be a weighted cycle with vertices $s_1, s_2,...,s_n$ and edges $\left(s_1,s_2\right)$, $\left(s_2,s_3\right)$,...,$\left(s_n,s_1\right)$,  and the product of weights of the edges is equal to $a$, where $a$ is an element of infinite order. Then, the representation $\mathcal{R}_{\Gamma,f}$ is faithful, i.e., the matrix group generated by matrices $\mathcal{R}_{\Gamma,f}\left(s_i\right)=\omega_i$ (for $1\leqslant i\leqslant n$) is isomorphic to the affine Coxeter group $\tilde{A}_{n-1}$ of the Euclidean type $\tilde{\mathbb A}_{n-1}$, see~\cite{Kac}.

\section{Balanced weighted Coxeter graphs and generalized numbers game}\label{BalWCxGrNumGm}
\bpr\label{mainis}
For a balanced weighted Coxeter graph $\Gamma_f=(V,E,f)$ the representation $\mathcal{R}_{\Gamma,f}$ is faithful.
\epr
\begin{proof}
We associate weights to the vertices of the graph in such a way that the weight of an edge is the ratio of the weights of its ends. It can be done as follows. Take any vertex to be the origin and put its weight to be 1. The weight of any other vertex will be the weight of a path from the origin to this vertex. The graph is balanced and therefore this construction is well defined.

Now consider the standard geometric representation (without weights) and perform the diagonal change of basis: let us multiply each basis vector by the weight of the corresponding vertex. This way we get exactly the representation $\mathcal{R}_{\Gamma,f}$. So, the representation $\mathcal{R}_{\Gamma,f}$ is faithful since it is isomorphic to the standard geometric representation which is faithful.
\end{proof}
Let us illustrate our proof with the following example.
\bex
Consider the simply laced Coxeter graph:\\
\xygraph{
!{<0cm,0cm>;<1cm,0cm>:<0cm,1cm>::}
!{(3,1) }*+{s_1}="1"
!{(5,1) }*+{s_2}="2"
!{(6.6,2) }*+{s_3}="3"
!{(6.6,0)}*+{s_4}="4"
!{(9,1)}*+{s_5}="5"
!{(11,1)}*+{s_6}="6"
"1"-"2" "2"-"4"
"2"-"3"  "4"-"5"
"3"-"5"  "5"-"6"
}\\
Let the group $G$ be the cyclic group $C_2=\{+1,-1\}$. In this case we call the weighted graph ``signed". Consider the following signed graph, which is balanced, as it can be easily seen:
\\
 \xygraph{
!{<0cm,0cm>;<1cm,0cm>:<0cm,1cm>::}
!{(3,1) }*+{s_1}="1"
!{(5,1) }*+{s_2}="2"
!{(6.6,2) }*+{s_3}="3"
!{(6.6,0)}*+{s_4}="4"
!{(9,1)}*+{s_5}="5"
!{(11,1)}*+{s_6}="6"
"1"-"2"^{-1} "2"-"4"^{+1}
"2"-"3"^{-1}  "4"-"5"^{+1}
"3"-"5"^{-1}  "5"-"6"^{+1}
}\\
Now we associate weights to the vertices applying the process described above in the proof of Proposition~\ref{mainis}:\\
\xygraph{
!{<0cm,0cm>;<1cm,0cm>:<0cm,1cm>::}
!{(3,1) }*+{(s_1,+1)}="1"
!{(5.8,1) }*+{(s_2,-1)}="2"
!{(8.5,2) }*+{(s_3,+1)}="3"
!{(8.5,0)}*+{(s_4,-1)}="4"
!{(11.8,1)}*+{(s_5,-1)}="5"
!{(14.7,1)}*+{(s_6,-1)}="6"
"1"-"2"^{-1} "2"-"4"^{+1}
"2"-"3"^{-1}  "4"-"5"^{+1}
"3"-"5"^{-1}  "5"-"6"^{+1}
}\\
According to the weights of vertices we construct the diagonal matrix of the change of basis:
$$
J=\begin{pmatrix}
1 &0 &0 &0 &0 &0\\
0 &-1 &0 &0 &0 &0\\
0 &0 &1 &0 &0 &0\\
0 &0 &0 &-1 &0 &0\\
0 &0 &0 &0 &-1  &0\\
0 &0 &0 &0 &0 &-1\end{pmatrix}\,.
$$
One can check that for $1\leqslant i \leqslant 6$ we have $J\sigma_i J^{-1}=\omega_i$, where $\sigma_i$ are images of generators $s_i$ under the standard geometric representation.
Take for example the matrices $\sigma_2$ and $\omega_2$:
$$
\sigma_2=\begin{pmatrix}
1 &0 &0 &0 &0 &0\\
1 &-1 &1 &1 &0 &0\\
0 &0 &1 &0 &0 &0\\
0 &0 &0 &1 &0 &0\\
0 &0 &0 &0 &1  &0\\
0 &0 &0 &0 &0 &1\end{pmatrix}\,\,,\,\,
\omega_2=\begin{pmatrix}
1 &0 &0 &0 &0 &0\\
-1 &-1 &-1 &1 &0 &0\\
0 &0 &1 &0 &0 &0\\
0 &0 &0 &1 &0 &0\\
0 &0 &0 &0 &1  &0\\
0 &0 &0 &0 &0 &1\end{pmatrix}\,.
$$
We have:
\begin{align*}
J&\sigma_2J^{-1}=\\
&=\begin{pmatrix}
1 &0 &0 &0 &0 &0\\
0 &-1 &0 &0 &0 &0\\
0 &0 &1 &0 &0 &0\\
0 &0 &0 &-1 &0 &0\\
0 &0 &0 &0 &-1  &0\\
0 &0 &0 &0 &0 &-1\end{pmatrix}
\begin{pmatrix}
1 &0 &0 &0 &0 &0\\
1 &-1 &1 &1 &0 &0\\
0 &0 &1 &0 &0 &0\\
0 &0 &0 &1 &0 &0\\
0 &0 &0 &0 &1  &0\\
0 &0 &0 &0 &0 &1\end{pmatrix}
\begin{pmatrix}
1 &0 &0 &0 &0 &0\\
0 &-1 &0 &0 &0 &0\\
0 &0 &1 &0 &0 &0\\
0 &0 &0 &-1 &0 &0\\
0 &0 &0 &0 &-1  &0\\
0 &0 &0 &0 &0 &-1\end{pmatrix}=\\
&=\begin{pmatrix}
1 &0 &0 &0 &0 &0\\
-1 &-1 &-1 &1 &0 &0\\
0 &0 &1 &0 &0 &0\\
0 &0 &0 &1 &0 &0\\
0 &0 &0 &0 &1  &0\\
0 &0 &0 &0 &0 &1\end{pmatrix}=\omega_2\,.
\end{align*}

\eexa

We are now going to define a generalized version of the numbers game, corresponding to the faithful representation $\mathcal{R}_{\Gamma,f}$, which is associated to a balanced weighted Coxeter graph $\Gamma_f$. We begin with the brief description of the classical numbers game played on Coxeter graph of a Coxeter system $(W,S)$ according to~\cite{BB}. The point of this game is that it gives a combinatorial model of the Coxeter group $(W,S)$ where group elements correspond to positions and reduced decompositions correspond to legal play sequences.

Before proceeding we have to choose for each ordered pair of generators $(s,s')$ such that $m(s,s')\geqslant 3$ a real number $k_{s,s'}>0$ such that
$$
\left\{ \begin{array}{cc}
{k_{s,s'}k_{s',s}=4\cos^2{\pi\over m(s,s')}\,\,\,\,,\,\,}      &  {\textrm{if}\,\,\,m(s,s')\neq\infty} \\
{k_{s,s'}k_{s',s}\geqslant 4\,\,\,\,\,\,\qquad\qquad\,\,\,\,,\,\,}          & {\textrm{if}\,\,\,m(s,s')=\infty}
\end{array}\right.
$$
These numbers, that we refer to as weights, remain fixed once chosen. The edge weights can of course always be chosen symmetrically:
$$
k_{s,s'}=k_{s',s}=2\cos{\pi\over m(s,s')} \,\,.
$$
For $m(s,s')=3$, $k_{s,s'}=k_{s',s}=1$. For $m(s,s')\in\{4,6\}$ the asymmetric choice of weights has an advantage since all weights are integers: for $m(s,s')=4$, $k_{s,s'}=2$, $k_{s',s}=1$; for $m(s,s')=6$, $k_{s,s'}=3$, $k_{s',s}=1$. For $m(s,s')=\infty$ the weights also can be taken integers: $k_{s,s'}=k_{s',s}=2$.

The starting position for the game can be any distribution $s\mapsto p_s$ of real numbers $p_s$ to the nodes $s\in S$ of the Coxeter graph. A position is called positive if $p_s>0$ for all  $s\in S$. We think of a game position as of column-vector $p=\left(p_{s_1},p_{s_2},...\right)^t\in\mathbb R^{|S|}$. A game position is also called a {\it configuration}. The special position with $p_s=1$ for all $s\in S$ is called the unit position and denoted {\bf 1}.

Moves are defined as follows. A firing of node $s$ changes a position $p\in \mathbb R^{|S|}$ in the following way:
\begin{itemize}
\item switch sign of the value at $s$, $p_s$,
\item add $k_{s,s'}p_s$ to the value at each neighbor $s'$ of $s$,
\item leave all other values unchanged.

\end{itemize}
Such a move is called positive if $p_s>0$ and negative if $p_s<0$. A positive game is one that is played with positive moves from a given starting position, and similarly for a negative game. A (positive, negative) play sequence is a word $s_1s_2\ldots s_{\ell}$ ($s_i\in S$) recording a (positive, negative) game in which $s_1$ is fired first, then $s_2$, then $s_3$ and so on.

Easy to see that the firing of the node $s$ is the action of the matrix $\sigma_s^t$ (on the column vector of the position $p\in\mathbb R^{|S|}$, where the matrix $\sigma_s$ is the image of the generator $s$ under the geometric representation.

Suppose a starting position $p\in\mathbb R^{|S|}$ is given. Then every play sequence $s_1s_2\ldots s_{\ell}$ will by composition of mappings $\sigma^t_i$ lead to some other position denoted by $p^{s_1s_2\ldots s_{\ell}}$. Let $\mathcal P_p\subset \mathbb R^{|S|}$ denote the set of all positions that can be reached this way. The following theorem is the main theorem concerning the numbers game given in~\cite{BB}.
\begin{thm}\label{BBNG} Consider play sequences starting from a certain positive position $p\in\mathbb R^{|S|}_+$.
\begin{enumerate}
\item Two play sequences $s_1s_2\ldots s_{\ell}$ and $s'_1s'_2\ldots s'_{\ell}$ ($s_i,s'_i\in S$) lead to the same position (i.e., $p^{s_1s_2\ldots s_k}=p^{s'_1s'_2\ldots s'_{\ell}}$) if and only if  $s_1s_2\ldots s_{\ell}=s'_1s'_2\ldots s'_k$ as elements of the Coxeter group $W$.
\item The induced mapping $w\mapsto p^w$ is a bijection $W\rightarrow \mathcal P_p$.
\item Let $D_R(w)$ denote the right descent set of $w$.\\ Then $D_R(w)=\left\{s\in S\,:\,\textrm{the s-coordimate of}\,\, p^w\,\,\textrm{is negative}\right\}$
\item The play sequence $s_1s_2\ldots s_{\ell}$ is positive if and only if $s_1s_2\ldots s_{\ell}$ is a reduced decomposition.
\end{enumerate}
\end{thm}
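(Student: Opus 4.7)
The plan is to reformulate the game moves as matrix actions, then leverage the known faithfulness of the standard geometric representation. A firing of node $s$ transforms a column vector $p\in\mathbb R^{|S|}$ by $p\mapsto \sigma_s^t p$, and since each $\sigma_s$ is an involution, the assignment $s\mapsto \sigma_s^t$ extends to a linear action of $W$ on $\mathbb R^{|S|}$, namely the contragredient of the standard geometric representation. In particular, the position $p^{s_1 s_2\cdots s_\ell}$ reached after a play sequence is precisely the image of $p$ under the action of the corresponding product of matrices (with the appropriate ordering convention due to transposition). Hence, whenever two play sequences represent the same element of $W$, they produce the same position, which establishes the easy direction of (1).

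For the converse in (1) and for (2), it suffices to prove that the stabilizer of any positive configuration $p$ under this $W$-action is trivial, for then $p^{w_1}=p^{w_2}$ forces $w_1=w_2$. This is the classical Tits cone statement: the interior $\{f\in V^*:f(\alpha_s)>0 \text{ for all } s\in S\}$ of the fundamental chamber is a strict fundamental domain on which $W$ acts freely, and a positive configuration corresponds exactly to an interior point of this chamber under the obvious identification. With freeness in hand, the bijection $w\mapsto p^w:W\to\mathcal P_p$ of (2) is immediate from (1).

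For part (3), I would argue by induction on $\ell(w)$. The base case $w=e$ is clear, since $p$ is positive and $D_R(e)=\emptyset$. For the inductive step, write $w=w's$ with $\ell(w)=\ell(w')+1$; the firing rule yields $p^w=\sigma_s^t p^{w'}$, which flips the sign of the $s$-coordinate and shifts each neighbor's coordinate by a positive multiple of $p^{w'}_s$. Combining this local rule with the root-system identity $s'\in D_R(w)\iff w(\alpha_{s'})\in\Phi^-$, together with the fact that the $s'$-coordinate of $p^w$ has the same sign as the pairing of $w(\alpha_{s'})$ against $p$, yields the desired equality of sets. Part (4) then follows quickly: a step firing $s_i$ is positive iff the current $s_i$-coordinate is positive iff, by (3), $s_i\notin D_R(s_1\cdots s_{i-1})$ iff $\ell(s_1\cdots s_i)=\ell(s_1\cdots s_{i-1})+1$; iterating, the whole play sequence is positive precisely when it is reduced.

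The main obstacle is the freeness statement used for (2): showing that only the identity of $W$ fixes a positive configuration. This is where the deep content of the standard geometric representation — the Tits cone structure and the description of its interior chambers — must be invoked; everything else is bookkeeping (induction on length, the exchange condition) built on top of this input.
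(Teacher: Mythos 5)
Your outline is correct, but note that the paper does not prove this theorem at all: it is quoted verbatim as the main numbers-game theorem of Bj\"orner--Brenti \cite{BB}, and your argument (contragredient action of the geometric representation, freeness of $W$ on the open Tits chamber for parts (1)--(2), the identity $(p^w)_{s'}=f_p(w\alpha_{s'})$ together with $s'\in D_R(w)\iff w\alpha_{s'}\in\Phi^-$ for parts (3)--(4)) is essentially the standard proof given there. The only point worth adding is that the theorem allows asymmetric positive weights $k_{s,s'}\neq k_{s',s}$, so one should observe that such a choice is conjugate to the symmetric one by a positive diagonal matrix, which preserves positivity of configurations and of coordinates; with that remark your sketch goes through, and in fact the root-system identity you invoke in (3) already gives that part directly, without the induction.
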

Now we describe our generalization of the classical numbers game. We choose as the edge weights the complex numbers $k_{s,s'}$ satisfying the conditions
\begin{enumerate}
\item For any pair of different generators $(s,s')$
$$
\left\{ \begin{array}{cc}
{k_{s,s'}k_{s',s}=4\cos^2{\pi\over m(s,s')}\,\,\,\,,\,\,}      &  {\textrm{if}\,\,\,m(s,s')\neq\infty} \\
{k_{s,s'}k_{s',s}\geqslant 4\,\,\,\,\,\,\qquad\qquad\,\,\,\,,\,\,}          & {\textrm{if}\,\,\,m(s,s')=\infty}
\end{array}\right.
$$
\item For any pair of different generators $(s,s')$ the number $k_{s,s'}$ can be decomposed as
$$
k_{s,s'}=f_{s,s'}\ell_{s,s'}
$$
in such a way that for any two vertices $s$ and $s'$ connected with an edge
$f_{s,s'}f_{s',s}=1$ and $\ell_{s,s'}$, $\ell_{s',s}$ are real positive.
(This means that the numbers $\ell_{s,s'}$ satisfy the condition (1) like the numbers $k_{s,s'}$.)
\item
For any cycle with vertices $s_1$, $s_2$, ..., $s_t$
$$
f_{s_1,s_2}\cdot f_{s_2,s_3}\cdots
 f_{s_{t-1},s_t}\cdot f_{s_{t},s_1}=1\,.
$$
\end{enumerate}
Unlike the classical numbers game we do not require $k_{s,s'}$ to be positive real numbers. Instead of the positivity we require conditions (2) and (3) which are weaker. The classical numbers game can be seen as a particular case of this one where $f_{s_i,s_j}=1$ for all edges $\left(s_i,s_j\right)$.

Now we have a balanced weighted Coxeter graph $\Gamma_f$ with a legal weight function $(s,s')\mapsto f_{s,s'}$ (see Definition~\ref{lwf}). We choose the vertex $s_1$ to be the origin and associate to each vertex $s_i$ the weight $wt(s_i)$ of a path from the origin to $s_i$. Recall that the weight of a path is the product of weights  $f_{s_i,s_j}$ where $(s_i, s_j)$ are the edges of which this path consists. A move associated to a vertex $s$ changes a configuration $p$ exactly as above. The only difference is that instead of the notion of a positive (negative) move we need to use a {\it pseudo-positive  (pseudo-negative)} move: a move associated to a vertex $s$ is called {\it pseudo-positive  (pseudo-negative)} if $wt(s)p_s$ is a positive (negative) real number.

 Let $J$ be the diagonal matrix constructed in the proof of Proposition~\ref{mainis}. Let $p\in\mathbb R^{|S|}$ be a position, i.e., a column-vector. It follows immediately from the definition of a pseudo-positive move that the move $s$ is pseudo-positive for $p$ in the described above generalized numbers game associated to the balanced weighted Coxeter graph $\Gamma_f$ if and only if this move is positive for the position $Jp$ in the classical numbers game associated to the same graph $\Gamma$.

 Let $\sigma_s$ and $\omega_s$ be the matrices which correspond to the generator $s$ under the standard geometric representation and the generalized geometric representation $\mathcal{R}_{\Gamma,f}$ respectively. Then $J\sigma_s J^{-1}=\omega_s$, see the proof of Proposition~\ref{mainis}. Hence, $J\cdot\left(\left(\omega_s\right)^t p\right)=\left(\sigma_s\right)^t\cdot(Jp)$. Thus, a play sequence $s_1s_2\cdots s_{\ell}$ is pseudo-positive for the generalized game associated to the balanced weighted Coxeter graph $\Gamma_f$ if and only if this sequence is positive for the classical game associated to the same Coxeter graph $\Gamma$.

 In view of these simple observations the main Theorem~\ref{BBNG} concerning the numbers game given in~\cite{BB} and cited above can be formulated for this generalized numbers game too as follows:
\begin{thm} Consider play sequences starting from a certain position $p\in\mathbb R^{|S|}$ such that $Jp\in\mathbb R^{|S|}_+$. Denote by $\tilde{\mathcal P}_p\subset \mathbb R^{|S|}$ all the positions which can be reached by moves of the generalized numbers game starting with $p$.
\begin{enumerate}
\item Two play sequences $s_1s_2\ldots s_{\ell}$ and $s'_1s'_2\ldots s'_{\ell}$ ($s_i,s'_i\in S$) lead to the same position (i.e., $p^{s_1s_2\ldots s_k}=p^{s'_1s'_2\ldots s'_{\ell}}$) if and only if  $s_1s_2\ldots s_{\ell}=s'_1s'_2\ldots s'_k$ as elements of the Coxeter group $W$.
\item The induced mapping $w\mapsto p^w$ is a bijection $W\rightarrow \tilde{\mathcal P}_p$.
\item Let $D_R(w)$ denote the right descent set of $w$.\\ Then $D_R(w)=\left\{s\in S\,:\,\textrm{the s-coordimate of}\,\, Jp^w\,\,\textrm{is negative}\right\}$
\item The play sequence $s_1s_2\ldots s_{\ell}$ is pseudo-positive if and only if $s_1s_2\ldots s_{\ell}$ is a reduced decomposition.
\end{enumerate}
\end{thm}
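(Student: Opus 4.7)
My plan is to bootstrap the theorem from its classical analogue (Theorem~\ref{BBNG}) by means of the diagonal intertwiner $J$ constructed in the proof of Proposition~\ref{mainis}. The two paragraphs preceding the statement have already assembled the key ingredients: $J\sigma_s J^{-1}=\omega_s$, hence $J\cdot (\omega_s)^t p = (\sigma_s)^t (Jp)$, and a move at $s$ is pseudo-positive at $p$ exactly when it is positive at $Jp$. So my only real task is to lift these single-step observations to arbitrary play sequences and then read off each of the four conclusions.

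The first step will be a short induction on the length $\ell$ of a play sequence $s_1 s_2 \ldots s_\ell$, using the single-step identity to obtain
\[
J\cdot p^{s_1 s_2 \ldots s_\ell} \;=\; (Jp)^{s_1 s_2 \ldots s_\ell},
\]
where the left side is computed in the generalized game on $\Gamma_f$ starting at $p$, and the right side in the classical game on $\Gamma$ starting at $Jp$. In the same induction I will check that pseudo-positivity of the $i$th move at the $i$th generalized intermediate position is equivalent to positivity of that move at the $i$th classical intermediate position. Since $J$ is invertible, multiplication by $J$ then furnishes a bijection $\tilde{\mathcal P}_p \to \mathcal P_{Jp}$ that intertwines the play-sequence actions and matches pseudo-positive plays with positive plays.

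Once this intertwining is in place, each of items (1)--(4) becomes a direct translation of the corresponding part of Theorem~\ref{BBNG} applied to the classical game starting at $Jp\in\mathbb R^{|S|}_+$. For (1) and (2), bijectivity and the ``same element of $W$'' criterion are pulled back through $J$. For (3), the sign of the $s$-coordinate of $Jp^w$ equals, by construction, the sign of the $s$-coordinate of the classical position $(Jp)^w$, so the descent-set description transfers verbatim. For (4), a play sequence is pseudo-positive step-by-step precisely when the corresponding sequence on $Jp$ is positive, and hence by Theorem~\ref{BBNG}(4) it is a reduced decomposition. I do not anticipate a substantive obstacle; the only subtle point, and the reason the induction is needed rather than just applying the relation at the endpoint, is that pseudo-positivity must be verified at \emph{every} intermediate configuration, so the equivalence with positivity must be established pointwise along the play.
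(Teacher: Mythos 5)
Your proposal is correct and follows essentially the same route as the paper: the paper likewise derives the theorem from Theorem~\ref{BBNG} via the intertwining identities $J\sigma_s J^{-1}=\omega_s$, $J\cdot\left(\left(\omega_s\right)^t p\right)=\left(\sigma_s\right)^t(Jp)$, and the equivalence of pseudo-positive moves at $p$ with positive moves at $Jp$, treating the extension to whole play sequences as immediate. Your explicit induction along the play sequence just makes precise what the paper leaves as a ``simple observation.''
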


\section{Non-balanced weighted Coxeter graphs}
 In this Section we present an obstruction to faithfulness of the generalized geometric representation. Namely, we prove the following
\begin{thm}\label{nonfaith}
Let $\Gamma=(V,E,f)$ be a weighted Coxeter graph. Suppose, the vertices $s_1, s_2,...,s_n$ and edges $\left(s_1,s_2\right)$, $\left(s_2,s_3\right)$,...,$\left(s_n,s_1\right)$, form the cycle for which the product of weights of the edges is equal to $a$, where $a$ is an element of finite order $m$, $m>1$. Then, the representation $\mathcal{R}_{\Gamma,f}$ is not faithful.
\end{thm}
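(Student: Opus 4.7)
The plan is to exhibit a non-trivial element of $W$ lying in the kernel of $\mathcal{R}_{\Gamma,f}$. The natural candidate is a power of the Coxeter element $c := s_1 s_2 \cdots s_n$ of the parabolic subgroup $W_0 := \langle s_1, \ldots, s_n\rangle$ generated by the cycle vertices.

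First I would verify that $c$ has infinite order in $W$: the Coxeter graph of $W_0$ is the induced subgraph $\Gamma|_{\{s_1,\ldots,s_n\}}$, which contains the given cycle and hence is not a forest, so $W_0$ is infinite (every finite Coxeter group has a forest-shaped diagram), and a Coxeter element of an infinite irreducible Coxeter system has infinite order.

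Next I would analyse $C := \mathcal{R}_{\Gamma,f}(c) = \omega_1 \omega_2 \cdots \omega_n$ via a block decomposition. Let $V_0 \subseteq \mathbb{C}^{|S|}$ be the span of the cycle basis vectors. A direct inspection shows that each $\omega_i$ with $s_i$ on the cycle preserves $V_0$ and acts as the identity on the quotient $\mathbb{C}^{|S|}/V_0$, so in a basis with cycle vectors first
\[
C = \begin{pmatrix} C_0 & B \\ 0 & I \end{pmatrix},
\qquad
C^N = \begin{pmatrix} C_0^N & (I + C_0 + \cdots + C_0^{N-1})\,B \\ 0 & I \end{pmatrix}.
\]
If $C_0$ has finite order $N$ and does not have $1$ as an eigenvalue, then $I + C_0 + \cdots + C_0^{N-1}$ vanishes identically (it is zero on every eigenspace of $C_0$, because $\sum_{k=0}^{N-1}\lambda^k = 0$ for each $N$-th root of unity $\lambda \neq 1$), so $C^N = I$, giving the desired kernel element $c^N$.

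The heart of the argument is therefore to establish that $C_0$ has finite order and that $1 \notin \operatorname{spec}(C_0)$. Using a diagonal change of basis analogous to the proof of Proposition~\ref{mainis}, one may first concentrate the cycle weight on a single edge, say $(s_n, s_1)$. In the archetypal case where $\Gamma|_{\{s_1,\ldots,s_n\}}$ is precisely the simply-laced cycle, a direct (inductive) computation yields the characteristic polynomial
\[
\det(\lambda I - C_0) = (\lambda - a)\bigl(\lambda^{n-1} - a^{-1}\bigr);
\]
its roots are $a$ together with the $(n-1)$-th roots of $a^{-1}$, all of them roots of unity, while $1$ never occurs since $a \neq 1$. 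Diagonalizability is automatic except when $m \mid n$, in which case $a$ is a double root; one then checks that the geometric multiplicity of $a$ also equals two, exploiting the collapse $1 + a + \cdots + a^{n-1} = 0$. The principal technical obstacle is to extend this spectral analysis beyond the simply-laced, chord-free setting: when $\Gamma|_{\{s_1,\ldots,s_n\}}$ carries chords or edge labels $m_{i,i+1} > 3$, the eigenvalues of $C_0$ need not all be roots of unity, and a different kernel element (or a different invariant subspace) appears to be required.
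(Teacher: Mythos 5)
Your proposal is correct in substance and shares the paper's overall strategy --- restrict to the parabolic subgroup on the cycle vertices and use a diagonal conjugation (formalized in the paper as Proposition~\ref{gather}) to concentrate the weight $a$ on the single edge $(s_n,s_1)$ --- but you execute the finiteness step quite differently. The paper conjugates the $n$ generators themselves, by an explicit bidiagonal matrix $J$, into permutation matrices $t_1,\dots,t_{n-1}$ and one monomial matrix $A$ with nonzero entries $a^{\pm1}$; since $a$ has finite order $m$, these generate a finite group (isomorphic to $C_m^{n-1}\rtimes S_n$), while the cycle parabolic is the infinite affine group $\tilde A_{n-1}$, so the kernel is nontrivial. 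You instead analyse a single element, the Coxeter element $c$, compute the spectrum of its action $C_0$ on the cycle subspace, and exhibit the explicit kernel element $c^N$. Two remarks on the comparison. First, your block-triangular computation showing that the off-diagonal block $(I+C_0+\cdots+C_0^{N-1})B$ of $C^N$ vanishes is a genuine gain in care: the paper's written argument only controls the action of the cycle parabolic on the invariant subspace $V_0=\mathrm{span}(e_{s_1},\dots,e_{s_n})$ and is silent about the columns indexed by vertices outside the cycle, so you close a small gap rather than open one. Second, your two residual worries are either resolved by the paper's device or shared by it: (i) the diagonalizability of $C_0$ when $m\mid n$ --- which does require proof, since (as the case $a=1$ of $\tilde A_{n-1}$ shows) root-of-unity eigenvalues of the characteristic polynomial alone do not force finite order --- follows immediately from the paper's observation that $C_0=\omega_1\cdots\omega_n$ is conjugate to the monomial matrix $t_1\cdots t_{n-1}A$, an element of a finite group; importing that single conjugation would let you dispense with the characteristic-polynomial computation entirely; (ii) the restriction to a simply-laced, chordless cycle is present in the paper's proof as well (its explicit matrices $\tau(s_i)$ have off-diagonal entries $1$ and no chord entries), so your argument is not weaker than the paper's on this point, even though the theorem as stated is broader than what either argument literally covers.
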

The proof of Theorem~\ref{nonfaith} uses the following observation.
\bpr\label{gather}
Let $\Gamma=(V,E)$ be a cycle with $n$ vertices $s_1$,...,$s_n$ and directed edges $e_1=\left(s_1,s_2\right)$, $e_2=\left(s_2,s_3\right)$,...,$e_{n-1}=\left(s_{n-1},s_n\right)$, $e_n=\left(s_n,s_1\right)$. Let $f,h:\bar E\rightarrow \mathbb C\setminus\{0\}$ be two legal weight functions  defined as follows:
$$f\left(e_i\right)=a_i\,,\,1\leqslant i\leqslant n\,\,,$$
$$h\left(e_i\right)=1\,,\,1\leqslant i\leqslant n-1\,\,,\,\,h\left(e_n\right)=a_1a_2\cdots a_n\,.$$
Then representations $\mathcal{R}_{\Gamma,f}$ and $\mathcal{R}_{\Gamma,h}$ are isomorphic.
\epr
\begin{proof}
Define the diagonal $n\times n$ matrix:
$$
J=\begin{pmatrix}
1 &0   &0      &0 &\cdots &0\\
0 &a_1 &0      &0 &\cdots &0\\
0 &0   &a_1a_2 &0 &\cdots &0\\
\cdots &\cdots &\cdots &\cdots &\cdots &\cdots\\
0 &\cdots &\cdots  &0 &a_1a_2\cdots a_{n-2} &0\\
0 &\cdots &\cdots &0 &0 &a_1a_2\cdots a_{n-2}a_{n-1}\end{pmatrix}\,.
$$
It can be easily seen that $J\mathcal{R}_{\Gamma,f}\left(s_i\right)J^{-1}=\mathcal{R}_{\Gamma,h}\left(s_i\right)$ for $1\leqslant i \leqslant n$.
\end{proof}

Let us illustrate it by the following:
\bex Consider the graph

$\qquad\qquad\qquad\qquad$ \xygraph{
!{<0cm,0cm>;<1cm,0cm>:<0cm,1cm>::}
!{(5,1) }*+{s_1}="1"
!{(7,2) }*+{s_2}="2"
!{(9,1) }*+{s_3}="3"
!{(7,0)}*+{s_4}="4"
"1"-"2"^{a} "1"-"4"^{d}
"2"-"3"^{b} "3"-"4"^{c}
}

We go along the graph in the clockwise direction starting from the vertex which corresponds to $s_1$ and associated weights to the vertices:

$\qquad\qquad\qquad\qquad$ \xygraph{
!{<0cm,0cm>;<1cm,0cm>:<0cm,1cm>::}
!{(5,1) }*+{(s_1,1)}="1"
!{(7,2) }*+{(s_2,a)}="2"
!{(9,1) }*+{(s_3,ab)}="3"
!{(7,0)}*+{(s_4,abc)}="4"
"1"-"2"^{a} "1"-"4"^{d}
"2"-"3"^{b} "3"-"4"^{c}
}

Here $a,b,c,d$ are elements of some abelian group. The images of generators under the corresponding representation ($s_i\mapsto\omega_i$) are:
$$\omega_1=\begin{pmatrix}
-1 &a &0 &d^{-1}\\
0 &1 &0 &0 \\
0 &0 &1 &0 \\
0 &0 &0 &1 \end{pmatrix}\,\,,\,\,\,
\omega_2=\begin{pmatrix}
1 &0 &0 &0\\
a^{-1} &-1 &b &0 \\
0 &0 &1 &0 \\
0 &0 &0 &1 \end{pmatrix}\,\,, $$

$$\omega_3=\begin{pmatrix}
1 &0 &0 &0 \\
0 &1 &0 &0 \\
0 &b^{-1} &-1 &c \\
0 &0 &0 &1 \end{pmatrix},\,
\omega_4=\begin{pmatrix}
1 &0 &0 &0 \\
0 &1 &0 &0 \\
0 &0 &1 &0 \\
d &0 &c^{-1} &-1\end{pmatrix}.$$
Define the matrix
$$
J=\begin{pmatrix}
1 &0 &0 &0 \\
0 &a &0 &0 \\
0 &0 &ab &0 \\
0 &0 &0 &abc \end{pmatrix}\,.
$$
The matrix $J$ is the matrix of the change of basis.
We have
$$
J\omega_1J^{-1}=\tilde{\omega}_1=\begin{pmatrix}
-1 &1 &0 &(abcd)^{-1} \\
0 &1 &0 &0 \\
0 &0 &1 &0 \\
0 &0 &0 &1 \end{pmatrix}\,\,,\,\,\,
J\omega_2J^{-1}=\tilde{\omega}_2=\begin{pmatrix}
1 &0 &0 &0  \\
1 &-1 &1 &0  \\
0 &0 &1 &0 \\
0 &0 &0 &1 \end{pmatrix}\,, $$
$$
J\omega_3J^{-1}=\tilde{\omega}_3=\begin{pmatrix}
1 &0 &0 &0  \\
0 &1 &0 &0  \\
0 &1 &-1 &1  \\
0 &0 &0 &1 \end{pmatrix}\,\,,\qquad\quad
J\omega_4J^{-1}=\tilde{\omega}_4=\begin{pmatrix}
1 &0 &0 &0  \\
0 &1 &0 &0  \\
0 &0 &1 &0  \\
abcd &0 &1 &-1 \end{pmatrix}\,.
$$
The representation $s_i\mapsto\tilde{\omega}_i$ corresponds to the graph where all edges have weights $1$ except one edge which has the weight $abcd$ which is the product of the weights of the edges of the original graph:

$\qquad\qquad\qquad\qquad$ \xygraph{
!{<0cm,0cm>;<1cm,0cm>:<0cm,1cm>::}
!{(5,1) }*+{s_1}="1"
!{(7,2) }*+{s_2}="2"
!{(9,1) }*+{s_3}="3"
!{(7,0)}*+{s_4}="4"
"1"-"2"^{1} "1"-"4"^{abcd}
"2"-"3"^{1} "3"-"4"^{1}
}

If we start with another vertex, we gather the product of weights on another edge.
\eexa

\noindent
\textbf{Proof of Theorem~\ref{nonfaith}.}
 Denote by $t_i$ permutation matrices:
 $$
 t_1= \begin{pmatrix}
 0 &1 &0 &0 &\cdots &0\\
 1 &0 &0 &0  &\cdots &0\\
 0 &0 &1 &0 &\cdots &0\\
 \cdots &\cdots &\cdots &\cdots &\cdots &\cdots\\
 0 &\cdots &\cdots &0 &1 &0\\
 0 &\cdots &\cdots &0 &0 &1
 \end{pmatrix}\,\,,\,\,
 t_2= \begin{pmatrix}
 1 &0 &0 &0 &\cdots &0\\
 0 &0 &1 &0  &\cdots &0\\
 0 &1 &0 &0 &\cdots &0\\
 \cdots &\cdots &\cdots &\cdots &\cdots &\cdots\\
 0 &\cdots &\cdots &0 &1 &0\\
 0 &\cdots &\cdots &0 &0 &1
 \end{pmatrix}\,\,,...
$$
 $$
 t_{n-1}= \begin{pmatrix}
 1 &0  &0 &\cdots &0\\
 0 &1  &0  &\cdots &0\\
 \cdots &\cdots  &\cdots &\cdots &\cdots\\
 0 &\cdots  &0 &0 &1\\
 0 &\cdots  &0 &1 &0
 \end{pmatrix}\,\,,
 $$
 and the following monomial matrix $A$
 $$
 A= \begin{pmatrix}
 0 &0 &0 &\cdots &0 &a^{-1}\\
 0 &1 &0 &0  &\cdots &0\\
 0 &0 &1 &0 &\cdots &0\\
 \cdots &\cdots &\cdots &\cdots &\cdots &\cdots\\
 0 &\cdots &\cdots &0 &1 &0\\
 a &0 &\cdots &0 &0 &0
 \end{pmatrix}\,\,.
$$

Using Proposition~\ref{gather}, without loss of generality we can assume that the cycle with vertices $s_1, s_2,...,s_n$ have the weight $a$ on the edge $\left(s_n,s_1\right)$ and 1's on other edges. Consider the parabolic subgroup $U$ of $W$ generated by $s_1, s_2,...,s_n$ and consider the representation $\tau$ of $U$, which is the restriction of $\mathcal{R}_{\Gamma,f}$ on $U$.
$$\tau(s_1)=\omega_1=\begin{pmatrix}
-1 &1 &0 &\cdots  &0 &a^{-1}\\
0 &1 &0 &0 &\cdots  &0\\
0 &0 &1 &0 &\cdots  &0\\
 \cdots &\cdots &\cdots &\cdots &\cdots &\cdots\\
 0 &\cdots &\cdots &0 &1 &0\\
0 &0 &0 &\cdots  &0 &1 \end{pmatrix}\,\,,
$$
$$
\tau(s_2)=\omega_2=\begin{pmatrix}
1 &0 &0 &0 &\cdots  &0\\
1 &-1 &1 &0 &\cdots  &0\\
0 &0 &1 &0 &\cdots  &0\\
 \cdots &\cdots &\cdots &\cdots &\cdots &\cdots\\
 0 &\cdots &\cdots &0 &1 &0\\
0 &0 &0 &\cdots  &0 &1 \end{pmatrix}\,\,, $$
$$\cdots\cdots\cdots\cdots\cdots\cdots\cdots\cdots\cdots\cdots\cdots\cdots\cdots\cdots\cdots\cdots $$
$$\tau(s_{n-1})=\omega_{n-1}=\begin{pmatrix}
1 &0 &0 &0 &\cdots  &0\\
0 &1 &0 &0 &\cdots  &0\\
\cdots &\cdots &\cdots &\cdots &\cdots &\cdots\\
0 &\cdots  &0 &1 &-1 &1 \\
0 &\cdots  &0 &0 &0 &1 \end{pmatrix}\,\,,
$$
$$
\tau(s_{n})=\omega_{n}=\begin{pmatrix}
1 &0 &0 &0 &\cdots  &0\\
0 &1 &0 &0 &\cdots  &0\\
\cdots &\cdots &\cdots &\cdots &\cdots &\cdots\\
0 &0 &\cdots  &0 &1 &0 \\
a &0 &\cdots  &0 &1 &-1 \end{pmatrix}\,\,.
$$

 Define the matrix $J$ in the following way: $J$ has 1's in positions $(i,i)$, $-1$'s in positions $(i+1,i)$ and $-a^{-1}$ in position $(1,n)$:
$$
J=\begin{pmatrix}
1 &0 &0 &\cdots &0 &-a^{-1}\\
-1 &1 &0 &\cdots &0 &0 \\
0 &-1 &1 &\cdots &0 &0 \\
\cdots &\cdots &\cdots &\cdots &\cdots &\cdots\\
0 &\cdots  &0 &-1 &1 &0\\
0 &\cdots &0 &0 &-1 &1 \end{pmatrix}\,.
$$
  We have $J\omega_i J^{-1}=t_i$ for $1\leqslant i\leqslant n-1$ and $J\omega_n J^{-1}=A$. Since the element $a$ is of finite order and the matrices $t_1, t_2,...,t_{n-1},A$ are monomial the matrix group $\langle t_1, t_2,...,t_{n-1},A\rangle$ is a group of monomial matrices with finite number of different entries and so it is a finite group (isomorphic to $C_m^{n-1}\rtimes S_n$ of order $m^{n-1}n!$ where $m$ is the order of the element $a$ and $C_m$ is the multiplicative cyclic group). Therefore the matrix group $\langle \omega_1, \omega_2,...,\omega_{n-1},\omega_n\rangle$ is also finite because it is conjugate to the finite group $\langle t_1, t_2,...,t_{n-1},A\rangle$ (the conjugating matrix is $J$). So, the representation is not faithful since its image is a finite matrix group while the Coxeter group which corresponds to a cycle is an affine group of type A and is infinite. \hfill $\square$

\bex
Consider the cycle with four vertices.

$\qquad\qquad\qquad\qquad\qquad$ \xygraph{
!{<0cm,0cm>;<1cm,0cm>:<0cm,1cm>::}
!{(5,1) }*+{s_1}="1"
!{(7,1.5) }*+{s_2}="2"
!{(9,1) }*+{s_3}="3"
!{(7,0.5)}*+{s_4}="4"
"1"-"2"^{1} "1"-"4"^{a}
"2"-"3"^{1}
"3"-"4"^{1}
}

The matrices $\omega_i$ are:
$$\omega_1=\begin{pmatrix}
-1 &1 &0 &a^{-1}\\
0 &1 &0 &0 \\
0 &0 &1 &0 \\
0 &0 &0 &1 \end{pmatrix}\,\,,\,\,\,
\omega_2=\begin{pmatrix}
1 &0 &0 &0\\
1 &-1 &1 &0 \\
0 &0 &1 &0 \\
0 &0 &0 &1 \end{pmatrix}\,\,, $$

$$\omega_3=\begin{pmatrix}
1 &0 &0 &0\\
0 &1 &0 &0 \\
0 &1 &-1 &1 \\
0 &0 &0 &1 \end{pmatrix}\,\,,\,\,\,
\omega_4=\begin{pmatrix}
1 &0 &0 &0\\
0 &1 &0 &0 \\
0 &0 &1 &0 \\
a &0 &1 &-1 \end{pmatrix}\,\,.$$
Define
$$
J=\begin{pmatrix}
1 &0 &0 &-a^{-1}\\
-1 &1 &0 &0 \\
0 &-1 &1 &0 \\
0 &0 &-1 &1 \end{pmatrix}\,.
$$
Now we have
$$
J\omega_1 J^{-1}=\begin{pmatrix}
0 &1 &0 &0\\
1 &0 &0 &0 \\
0 &0 &1 &0 \\
0 &0 &0 &1 \end{pmatrix}=t_1\,,\,J\omega_2 J^{-1}=\begin{pmatrix}
1 &0 &0 &0\\
0 &0 &1 &0 \\
0 &1 &0 &0 \\
0 &0 &0 &1 \end{pmatrix}=t_2\,,
$$
$$
J\omega_3 J^{-1}=\begin{pmatrix}
1 &0 &0 &0\\
0 &1 &0 &0 \\
0 &0 &0 &1 \\
0 &0 &1 &0 \end{pmatrix}=t_3\,,\,J\omega_4 J^{-1}=\begin{pmatrix}
0 &0 &0 &a^{-1}\\
0 &1 &0 &0 \\
0 &0 &1 &0 \\
a &0 &0 &0 \end{pmatrix}=A\,.
$$
The group of $4\times 4$ matrices generated by $t_1, t_2, t_3, A$ is isomorphic to a semi-direct product of the group $C_m\times C_m\times C_m$ and $S_4$ where $m$ is the order of the element $a$, $C_m$ and $S_4$, as usual, denote the cyclic multiplicative group of order $m$ and the symmetric group of permutations of the set $\{1,2,3,4\}$ respectively.
\eexa
As a partial converse of Theorem~\ref{nonfaith} we have the following:
\bpr\label{onecyclefaith}
Let $\Gamma=(V,E,f)$ be a weighted cycle with vertices $s_1, s_2,...,s_n$ and the product of weights of the edges is equal to $a$, where $a$ is an element of infinite order. Then, the representation $\mathcal{R}_{\Gamma,f}$ is faithful, i.e., the matrix group generated by matrices $\mathcal{R}_{\Gamma,f}\left(s_i\right)=\omega_i$ (for $1\leqslant i\leqslant n$) is isomorphic to the affine Coxeter group $\tilde{A}_{n-1}$ of the Euclidean type $\tilde{\mathbb A}_{n-1}$, see~\cite{Kac}.
\epr

\begin{proof}

We define $\omega_i$ for $1\leq\i\leq n$ as an $n\times n$ matrix, where denote
the  entry of $\omega_i$ in the $j$-th row, and $k$-th column by
$a^{(i)}_{j,k}$. Then, $a^{(i)}_{k,k}=1$ for every $k\neq i$,
$a^{(i)}_{k,l}=0$ for $k\neq i$ and $k\neq
l$, $a^{(i)}_{i,i}=-1$, $a^{(i)}_{i,i-1}=1$ for $2\leq i\leq n$,
$a^{(i)}_{i,i+1}=1$ for $1\leq i\leq n-1$,
$a^{(1)}_{1,n}=a^{-1}$, $a^{(n)}_{n,1}=a$, and $a^{(i)}_{i,j}=0$, if
$i-j\notin \{0,1,-1\}$ modulo $n$.

Let $J$ be an $n\times n$ matrix where the entry of $J$ in the $i$-th
row, and $j$-th column is denoted by $b_{i,j}$. Define $b_{i,i}=1$ for
$1\leq i\leq n$, $b_{i+1,i}=-1$ for $1\leq i\leq n-1$, $b_{1,n}=-a^{-1}$,
and $b_{i,j}=0$ for $i-j\notin \{0, 1\} ~~mod ~~n$. Let $t_i$ be
$J\omega_{i}J^{-1}$ for every $1\leq i\leq n-1$, and let A be
$J\omega_{n}J^{-1}$.
Then we have, $t_i$ is a permutation matrix for $1\leq i\leq n-1$, and $A$
is a matrix which satisfies $A_{i,i}=1$ for $2\leq i\leq n-1$,
$A_{1,1}=A_{n,n}=0$, $A_{1,n}=a^{-1}$, $A_{n,1}=a$, and $A_{i,j}=0$
if $|i-j|\notin \{0, n-1\}$, where $A_{i,j}$ denotes the entry of $A$ in
the $i$-th row and $j$-th column.

Let $G=J\Omega J^{-1}$. It is clear that $G=\langle t_1, t_2, ..., t_{n-1},
A\rangle$
Define a map $\varphi:G\rightarrow \tilde{A}_{n-1}$, such that
$\varphi(t_i)=s_i$ for $1\leq i\leq n-1$, and $\varphi(A)=s_0$, where
$s_0, s_1, ..., s_{n-1}$ are the standard Coxeter generators of
$\tilde{A}_{n-1}$.

The matrix $A$ can be written as
$tD_0$, where $t$ is an $n\times n$ permutation matrix, and $D_0$ is an $n\times n$
diagonal matrix,
which satisfies $D_{1,1}=a$, $D_{i.i}=1$ for $2\leq i\leq n-1$, and
$D_{n,n}=a^{-1}$, where $D_{i,i}$ is the element in the $i$-th row and in
the $i$-th column of $D_0$. In particular $\det(D_0)=1$. Since, $t_i$ are
$n\times n$ permutation matrices for  every $1\leq  i\leq n-1$, every generator
of $G$ can be written as $XD$, where $X$ is a $n\times n$ permutation matrix and
$D$ is a $n\times n$ diagonal matrix, which entries in the diagonal are integer
powers of the element $a$, such that $\det(D)=1$. It is easy to show: if
$a_1$ and $a_2$ are $n\times n$ permutation matrices, and $D_1$ and $D_2$ are
$n\times n$ diagonal matrices, such that all the elements in the diagonal of
$D_1$ and in the diagonal of $D_2$ are integer powers of $a$, and
$\det(D_1)=\det(D_2)=1$, then the product $(a_1D_1)(a_2D_2)=a_1a_2D_3$,
where $D_3$ is also a diagonal matrix which entries are integer powers of
$a$, and $\det(D_3)=1$. Hence, every element of $G$ is a product of a $n\times n$
permutation matrix with a diagonal matrix which diagonal entries are
integer powers of the element $a$ such that the determinant of the
diagonal matrix is equal to 1. According to \cite{BB}, every element in
$\tilde{A}_{n-1}$ is a permutation $\theta$ of $\mathbb{Z}$, such that
$\sum_{i=1}^n\theta(i)=\frac{n(n+1)}{2}$, and $\theta(i+n)=\theta(i)+n$.
Hence, such a permutation can be written as $pq$, where $p$ is a
permutation of the elements $\{1,2,...,n\}$, and $q$ is a permutation of
$\mathbb{Z}$ which satisfies $q(i)=i+nk_i$, where $\sum_{i=1}^n k_i=0$.
If $a_1$ is a $n\times n$ permutation matrix corresponds to a permutation
$p_1$, $a_2$ is a permutation matrix  corresponds to a permutation $p_2$,
$D_1$ and $D_2$ are diagonal matrix, such that $D_{i,i}^{(j)}=k_i^{(j)}$
for $1\leq i\leq n$, and $1\leq j\leq 2$, where $D_{i,i}^{(j)}$ denotes
the entry in the $i$-th row and $i$-th column of the matrix $D_j$. Define
$q_1$ and $q_2$ permutations of $\mathbb{Z}$ such that
$q_j(i)=i-nk_i^{(j)}$, then $(p_1q_1)(p_2q_2)=p_1p_2q_3$, such that
$q_3(i)=i-nk_i^{(3)}$, where $k_i^{(3)}$ is the entry in the $i$-th row
and in the $i$-th column of $D_3$. Thus, there is a one-to-one
correspondence between the elements of $\tilde{A}_{n-1}$ and the
elements of $G$. Since $J\Omega J^{-1}=G$, $\Omega$ is isomorphic to
$\tilde{A}_{n-1}$ too.
\end{proof}

Let us now consider an important particular case of signed Coxeter graphs.
Let $(W,S)$ be a simply laced Coxeter system and let $\Gamma=(V,E)$ be its Coxeter graph. Let $f\,:\,E\rightarrow C_2=\{+1,-1\}$ be an arrangement of signs on the edges and $\Gamma_f=(V,E,f)$ be the signed Coxeter graph. Signed graphs with relations to Coxeter groups are  a well known and studied subject, see for example~\cite{CST}. Also, signed graphs appear in~\cite{AST}.
\begin{thm}\label{mainsign}
Let $\Gamma_f=(V,E,f)$ be a signed Coxeter graph. Then the representation $\mathcal{R}_{\Gamma,f}$ is faithful if and only if $\Gamma_f$ is balanced, i.e., the product of signs along any cycle is $+1$ or in other words any cycle has even number of $-1$'s.
\end{thm}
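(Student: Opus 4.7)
The ``if'' direction is already delivered by Proposition~\ref{mainis}, since a balanced signed graph is a balanced weighted Coxeter graph. So the plan is to prove the contrapositive of the ``only if'' direction: if $\Gamma_f$ is unbalanced, then $\mathcal{R}_{\Gamma,f}$ is not faithful. The strategy is to reduce to Theorem~\ref{nonfaith}. Since weights lie in $\{+1,-1\}$, an unbalanced cycle has weight $-1$, which is an element of finite order $m=2>1$; so Theorem~\ref{nonfaith} will give non-faithfulness \emph{provided} we can produce an unbalanced cycle whose vertex set induces exactly a cycle in $\Gamma$ (no chords), because the matrix forms used in the proof of Theorem~\ref{nonfaith} assume the only edges among $s_1,\dots,s_n$ are the cycle edges.

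First I would locate an induced unbalanced cycle. Let $C$ be an unbalanced cycle of minimum length in $\Gamma_f$. Suppose for contradiction that $C$ has a chord $e=\{s_i,s_j\}$. Then $C\cup\{e\}$ decomposes into two cycles $C_1$ and $C_2$, each strictly shorter than $C$, and the weight along $C$ equals $f(e)f(e^{-1})^{-1}\cdot w(C_1)w(C_2)=w(C_1)w(C_2)$ (the chord contributes $+1$ since it is traversed once in each direction). As $w(C)=-1$, at least one of $w(C_1),w(C_2)$ equals $-1$, contradicting the minimality of $|C|$. Hence $C$ is chordless, meaning the subgraph of $\Gamma$ induced by the vertices of $C$ is exactly a simple cycle.

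Next I would invoke Theorem~\ref{nonfaith} on this induced cycle, taking the parabolic subgroup $U\leq W$ generated by the vertices of $C$. Because $\Gamma_f$ is simply laced and $C$ is induced, the Coxeter subsystem associated with $U$ is precisely the cycle subgraph underlying $C$, so $U$ is the affine Coxeter group of type $\tilde A_{|C|-1}$, and the restriction of $\mathcal{R}_{\Gamma,f}$ to $U$ is exactly the representation analyzed in the proof of Theorem~\ref{nonfaith} with parameter $a=-1$. That proof exhibits a finite matrix group as the image of $U$, while $U$ itself is infinite, producing a nontrivial kernel inside $U$. Any nontrivial kernel of the restriction is a nontrivial kernel of $\mathcal{R}_{\Gamma,f}$, so the representation is not faithful.

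The only step that really requires care is the chord-reduction argument, and it hinges on the observation that the $\pm 1$-weight of a cycle is multiplicative under the ``split by a chord'' operation; this is automatic because traversing the chord once in each direction contributes $f(e)\cdot f(e)^{-1}=1$, and is the place where the simply laced, $C_2$-valued setting matters (for general abelian coefficients one could not conclude that a shorter cycle inherits a finite-order obstruction). Once the induced unbalanced cycle is in hand, the rest is a direct citation of Theorem~\ref{nonfaith} together with the standard fact that the parabolic subgroup on an induced subgraph of a simply laced Coxeter graph is itself a Coxeter group on that subgraph.
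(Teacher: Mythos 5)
Your proof is correct and follows the same route as the paper, which disposes of this theorem in one line by combining Proposition~\ref{mainis} (for the balanced direction) with Theorem~\ref{nonfaith} applied to an unbalanced cycle, whose weight $-1$ has finite order $2$. Your extra chord-reduction step --- extracting a minimal, hence chordless/induced, unbalanced cycle so that the explicit matrices in the proof of Theorem~\ref{nonfaith} genuinely describe the restriction to the corresponding parabolic subgroup --- addresses a point the paper passes over silently, and is a worthwhile precision rather than a departure from its argument.
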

\begin{proof} For the proof we just have to combine Proposition~\ref{mainis} and Theorem~\ref{nonfaith}.
\end{proof}
\section{Conclusions}
In this paper we define a notion of the legal weight function and a certain generalization of the standard geometric representation of a Coxeter group which is associated to such a function. We give two sufficient conditions for faithfulness and non-faithfulness of this representation in terms of certain properties of the weight function. This research can be continued in various directions. We list here several problems which remain open and seem to be interesting.

\noindent
\textbf{Problem 1.} Is the converse of Theorem~\ref{nonfaith} true? I. e. is it true that the representation $\mathcal{R}_{\Gamma,f}$ is faithful if and only if there is no cycle such that the product of weights along this cycle is an element of finite order greater than one?

\noindent
\textbf{Problem 2.} What can be said about the numbers game associated to the weighted Coxeter graph $\Gamma_f$ when the representation $\mathcal{R}_{\Gamma,f}$ is faithful but the weighted graph $\Gamma_f$ is not balanced?

\noindent
\textbf{Problem 3.} Let the representation $\mathcal{R}_{\Gamma,f}$ be non-faithful. Describe the quotient of the Coxeter group $W$ that we get in this case. Can we define the generalized numbers game in this case?  Is this game related to this quotient in any way? Can any quotient be obtained by finding a suitable legal weight function?

The list of problems can be continued.

{\bf Acknowledgements.} This work was started when the second author was a postdoc at the Department of Mathematics at the University of Geneva; the second author would like to thank the department and especially Prof. Tatiana Smirnova-Nagnibeda
for the hospitality and introducing him to the subject of this paper. Also, the second author is grateful to Prof. Vadim E. Levit for the interesting and helpful discussion of the subject of this paper. The authors are grateful to the reviewers for helpful comments.

\end{document}